\newtheorem{thm}{Theorem}[section]
\newtheorem{lem}[thm]{Lemma}
\theoremstyle{definition}
\theoremstyle{remark}
\newtheorem{rem}[thm]{Remark}
\numberwithin{equation}{section}
\begin{document}

\title[]{An extension of Van Vleck's functional equation for the sine}%
\author{Bouikhalene Belaid and Elqorachi Elhoucien}%
%\address{}%
%\email{}%

\thanks{2010 Mathematics Subject Classification: 39B32, 39B52}%
%\subjclass{}%
\keywords{semigroup; d'Alembert's functional equation; sine function; Van Vleck, involution; multiplicative function, homomorphism. }%

%\date{}%
%\dedicatory{}%
%\commby{}%
% ----------------------------------------------------------------
\begin{abstract}
In \cite{St3} H. Stetk\ae r obtained the solutions of Van Vleck's
functional equation for the sine $$f(x\tau(y)z_0)-f(xyz_0)
=2f(x)f(y),\; x,y\in G,$$ where $G$ is a semigroup, $\tau$ is an
involution of $G$ and $z_0$ is a fixed element in the center of $G$.
The purpose of  this paper is to determine the complex-valued
solutions of the following extension of Van Vleck's functional
equation for the sine $$\mu(y)f(x\tau(y)z_0)-f(xyz_0) =2f(x)f(y),
\;x,y\in G,$$ where $\mu$ : $G\longrightarrow \mathbb{C}$ is a
multiplicative function such that $\mu(x\tau(x))=1$ for all $x\in
G$. Furthermore, we obtain the solutions of a variant of Van Vleck's
functional equation for the sine
$$\mu(y)f(\sigma(y)xz_0)-f(xyz_0) = 2f(x)f(y),
\;x,y\in G$$ on monoids, and where $\sigma$ is an automorphism
involutive of $G$.
\end{abstract}
\maketitle
% ----------------------------------------------------------------
\section{Introduction}
In 1910, Van Vleck \cite{V1,V2} studied the continuous solution $f$
: $\mathbb{R} \longrightarrow \mathbb{R}$, $f\neq 0$ of the
following functional equation
\begin{equation}\label{eq1}
f(x - y + z_0)-f(x + y + z_0) = 2f(x)f(y),\;x,y \in
\mathbb{R},\end{equation} where $z_0>0$ is fixed. He showed first
that all solutions are periodic with period $4z_0$, and then he
selected for his study any continuous solution with minimal period
$4z_0$. He proved that such solution has to be the sine function
$f(x)=\sin(\frac{\pi}{2z_0}x)$, $x\in \mathbb{R}$. We refer also to \cite{K}.\\
In \cite{S}, Sahoo studied the following generalization
 \begin{equation}\label{eq2}
 f(x - y +z_0) + g(x + y + z_0) = 2f(x)f(y)\; x,y\in G\end{equation}
 of the functional equations (\ref{eq1}). He determined the general
solutions of this equation on an abelian group $G$. Stetk\ae r [8,
Exercise 9.18] found the complex-valued solution of equation
\begin{equation}\label{eq3} f(xy^{-1}z_0)-f(xyz_0) =
2f(x)f(y),\;x,y \in G,\end{equation}  when $G$ a group not
necessarily abelian and $z_0$ is a fixed element in the center of
$G$. Recently, Perkins and Sahoo \cite{P} replaced the group
inversion by the more general involution $\tau$: $G\longrightarrow
G$ and they obtained the abelian, complex-valued solutions of
equation
\begin{equation}\label{eq4} f(x\tau(y)z_0)-f(xyz_0) = 2f(x)f(y),\;x,y
\in G.\end{equation}  Stetk\ae r \cite{St3} extends the results of
Perkins and Sahoo \cite{P} about equation (\ref{eq4}) to the more
general case where $G$ is a semigroup and the solutions are not
assumed to be abelian.\\
The first purpose of this paper is to extend the results of Stetk\ae
r \cite{St3} to the following generalization of Van Vleck's
functional equation for the sine
\begin{equation}\label{eq5}
\mu(y)f(x\tau(y)z_0)-f(xyz_0) =2f(x)f(y), x,y\in G,\end{equation}
where $\mu$ is a  multiplicative function of a semigroup $G$ such
that $\mu(x\tau(x))=1$ for all $x\in G$.   As in the previous
results the main idea is to relate the functional equation
(\ref{eq4}) to the corresponding  d'Alembert's functional equation.
In our case we shall relate (\ref{eq5}) to the following version of
d'Alembert's functional equation
\begin{equation}\label{eq6}
    g(xy)+\mu(y)g(x\tau(y))=2g(x)g(y),\;x,y\in G
\end{equation} and we apply the crucial propositions [8, Proposition 9.17(c)] and [8
, Proposition
8.14(a)]. \\Replacing $f$ by $-F$ in (\ref{eq5}) we arrive at
$$F(xyz_0)-\mu(y)F(x\tau(y)z_0) = 2F(x)F(y),$$ which shows the similarity
between (\ref{eq5}) and (\ref{eq6}).\\
In Section 3, we obtain the solutions of a variant of Van Vleck's
functional equation for the sine
\begin{equation}\label{eq555}
\mu(y)f(\sigma(y)xz_0)-f(xyz_0) = 2f(x)f(y),\; x,y\in
G\end{equation}
 where $G$ is a monoid (semigroup with identity
element $e$), and  $\sigma$ is an automorphism involutive: That is
$\sigma(xy)=\sigma(x)\sigma(y)$ and $\sigma(\sigma(x))=x$ for all
$x,y$. In this case the main idea is to relate the functional
equation (\ref{eq555}) to  to the following variant of d'Alembert's
functional equation
\begin{equation}\label{eq666}
    g(xy)+\mu(y)g(\sigma(y)x)=2g(x)g(y),\;x,y\in G
\end{equation} and we apply the result obtained by Elqorachi and Redouani in [2, Lemma 3.2]. We refer also to \cite{stetkaer}
 in which the solutions of equation (\ref{eq666}) with $\mu=1$ are obtained on semigroups. \\The new feature of our paper is the introduction of the
multiplicative function $\mu$.
\section{Solutions of equation (\ref{eq5}) on semigroups}
Throughout this section $G$ denote a semigroup, $\tau$ :
$G\longrightarrow G$ is an involution of $G$. That is
$\tau(xy)=\tau(y)\tau(x)$ and $\tau(\tau(x))=x$ for all $x,y\in G$.
The element $z_0$ denotes a fixed element in the center of $G$.
Finally $\mu$ : $G\longrightarrow \mathbb{C}$ is a multiplicative
function such that $\mu(x\tau(x))=1$ for all $x\in G$. \\We first
prove the following lemmas which are generalizations of the useful
lemmas obtained by Stetk\ae r \cite{St3}.
\begin{lem} Let $f \neq 0$ be a solution of (\ref{eq5}). Then for
all $x\in G$ we have
\begin{equation}\label{eq7}
    f(x)=-\mu(x)f(\tau(x)),
\end{equation}
\begin{equation}\label{eq8}
    f(z_0)\neq 0,
\end{equation}
\begin{equation}\label{eq9'}
    f(z_{0}^{2})=0,
\end{equation}
\begin{equation}\label{eq9}
    f(x\tau(z_0)z_0)=\mu(\tau(z_0))f(x)f(z_0),
\end{equation}
\begin{equation}\label{eq10}
    f(xz_{0}^{2})=-f(z_0)f(x),
\end{equation}
\begin{equation}\label{eq11}
    \mu(x)f(\tau(x)z_0)=f(xz_0).
\end{equation}Let $G$ be a group and $\tau(x)=x^{-1}$ for all $x\in
G$. Then $f(z_0)=\mu(z_0)$ for any solution $f\neq 0$ of (\ref{eq5})
and  $f(xz_{0}^{4})=\mu(z_0)^{2}f(x)$ for all $x\in G.$
 \end{lem}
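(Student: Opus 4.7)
The plan has four steps: prove (\ref{eq7}) by a symmetrization trick; establish the nonvanishing (\ref{eq8}) by contradiction (this is the main obstacle); derive an auxiliary d'Alembert-type identity that, together with two direct computations, yields (\ref{eq9'})--(\ref{eq11}); and finally handle the group case.

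To prove (\ref{eq7}), I write (\ref{eq5}) both as stated and with $y$ replaced by $\tau(y)$. Multiplying the second version by $\mu(y)$ and adding to the first, the $f(x\tau(y)z_0)$ and $f(xyz_0)$ terms cancel thanks to $\mu(y)\mu(\tau(y)) = \mu(y\tau(y)) = 1$, leaving $0 = 2f(x)[f(y) + \mu(y)f(\tau(y))]$. Since $f\not\equiv 0$, this forces (\ref{eq7}). A direct consequence is that $f(u) = 0$ whenever $\tau(u) = u$ and $\mu(u) = 1$. In particular, $\tau(z_0)$ lies in $Z(G)$ (because it commutes with every $\tau(y) = y$), so $\tau(z_0)z_0$ is $\tau$-invariant and $\mu(\tau(z_0)z_0) = 1$, giving $f(\tau(z_0)z_0) = 0$.

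For (\ref{eq8}) I argue by contradiction. Suppose $f(z_0) = 0$; then (\ref{eq7}) gives $f(\tau(z_0)) = 0$. Setting $y = z_0$ in (\ref{eq5}) yields $f(xz_0^2) = \mu(z_0)f(x\tau(z_0)z_0)$, while setting $x = z_0$ yields $f(yz_0^2) = \mu(y)f(\tau(y)z_0^2)$. Combining these with (\ref{eq7}) forces $f(x\tau(z_0)z_0) = 0$, hence $f(xz_0^2) = 0$, for every $x \in G$. Replacing $x$ by $xz_0$ in (\ref{eq5}) then makes both terms on the left-hand side vanish identically, so $2f(xz_0)f(y) = 0$ and therefore $f(xz_0) = 0$ for every $x$. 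Equation (\ref{eq5}) now reads $0 = 2f(x)f(y)$, forcing $f \equiv 0$, the desired contradiction.

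The remaining identities flow from an auxiliary equation. Applying (\ref{eq5}) with $y$ replaced by $yz_0$, subtracting $\mu(z_0)$ times (\ref{eq5}) with $x$ replaced by $x\tau(z_0)$ (whose $\mu(y)\mu(z_0)f(x\tau(z_0)\tau(y)z_0)$-terms cancel by centrality of $\tau(z_0)$), and finally applying (\ref{eq5}) at $(x,y) \mapsto (xy, z_0)$ to combine the remaining $f(xyz_0^2)$- and $f(xy\tau(z_0)z_0)$-terms, one obtains
$$f(xy)f(z_0) + \mu(z_0)f(x\tau(z_0))f(y) = f(x)f(yz_0);$$
call this $(R)$. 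Two short computations follow. First, (\ref{eq5}) at $(x,y) = (\tau(z_0), z_0)$ together with (\ref{eq7}) yields $f(z_0\tau(z_0)^2) = -\mu(\tau(z_0))^2 f(z_0)^2$, and feeding this into (\ref{eq5}) at $(x,y) = (z_0, z_0)$ gives $f(z_0^3) = -f(z_0)^2$. Substituting both into $(R)$ at $(x,y) = (z_0^2, z_0)$ produces $f(z_0^2)^2 = 0$, which is (\ref{eq9'}). With $f(z_0^2) = 0$, the equation $(R)$ at $y = z_0$ becomes $f(z_0)[f(xz_0) + \mu(z_0)f(x\tau(z_0))] = 0$; by (\ref{eq8}) this is the equivalent form $f(xz_0) = -\mu(z_0)f(x\tau(z_0))$ of (\ref{eq11}), and substituting it back into (\ref{eq5}) at $y = z_0$ yields (\ref{eq10}). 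Identity (\ref{eq9}) is then (\ref{eq5}) at $y = \tau(z_0)$ combined with (\ref{eq10}) and (\ref{eq7}). Finally, in the group case $\tau(x) = x^{-1}$, setting $x = y = z_0$ in (\ref{eq5}) and using $f(z_0^3) = -f(z_0)^2$ from (\ref{eq10}) gives $f(z_0)[\mu(z_0) - f(z_0)] = 0$, so $f(z_0) = \mu(z_0)$ by (\ref{eq8}); iterating (\ref{eq10}) twice then yields $f(xz_0^4) = -f(z_0)f(xz_0^2) = f(z_0)^2 f(x) = \mu(z_0)^2 f(x)$.
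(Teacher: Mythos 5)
Your proof is correct, but it reaches the six identities by a genuinely different route than the paper. The paper, after the shared symmetrization step for (\ref{eq7}), puts $x=\tau(z_0)$ into (\ref{eq5}) and uses (\ref{eq7}) to identify the two terms on the left-hand side, which delivers (\ref{eq9}) almost immediately; (\ref{eq10}) follows by $y=z_0$, then (\ref{eq8}) by the substitution $x\mapsto xz_0$, then (\ref{eq9'}) via the observation $f(z\tau(z))=0$ for all $z$ (obtained from $y=z\tau(z)$), and finally (\ref{eq11}) from $x=z_0^2$. You instead prove (\ref{eq8}) first by a self-contained contradiction argument, and then derive everything else from an auxiliary sine-addition-type identity $(R)$: $f(xy)f(z_0)+\mu(z_0)f(x\tau(z_0))f(y)=f(x)f(yz_0)$, which the paper never writes down; your order is (\ref{eq9'}) $\to$ (\ref{eq11}) $\to$ (\ref{eq10}) $\to$ (\ref{eq9}), essentially the reverse of the paper's. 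I checked the two places where you are terse and both close: the claim that (\ref{eq7}) combined with your relations (A) and (B) forces $f(xz_0^2)=0$ does hold (apply (\ref{eq7}) to the element $x\tau(z_0)z_0$, use centrality of $\tau(z_0)$ and $\mu(z_0\tau(z_0))=1$ to get $f(xz_0^2)=-\mu(x)f(\tau(x)z_0^2)$, and add this to (B)); and $f(xz_0)=-\mu(z_0)f(x\tau(z_0))$ is indeed equivalent to (\ref{eq11}) after one more application of (\ref{eq7}). The paper's argument is more economical, since the single substitution $x=\tau(z_0)$ does most of the work; your identity $(R)$ costs a longer computation but is a nice structural byproduct that makes visible the sine-addition law underlying the equation and foreshadows the reduction to d'Alembert's equation in Lemma 2.2. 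Both arguments are valid; it would be worth spelling out the two compressed steps noted above if you write this up.
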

\begin{proof}If we replace $y$ by $\tau(y)$ in (\ref{eq5}) we obtain
\begin{equation}\label{eq12}
    \mu(\tau(y))f(xyz_0)-f(x\tau(y)z_0)=2f(x)f(\tau(y)).
\end{equation}
By multiplying (\ref{eq12}) by $\mu(y)$ and using the assumption
that  $\mu(y\tau(y))=1$ we get
\begin{equation}\label{eq13}
    \mu(y)f(x\tau(y)z_0)-f(xyz_0)=-2f(x)\mu(y)f(\tau(y)).
\end{equation}
By comparing (\ref{eq5}) with (\ref{eq13}) we get
$-f(x)\mu(y)f(\tau(y))=f(x)f(y)$ for all $x,y\in G$. Since $f\neq
0$, then we have (\ref{eq7}).\\
Setting $x=\tau(z_0)$ in (\ref{eq5}) and using (\ref{eq7}) we see
that
\begin{equation}\label{eq14}
    \mu(y)f(\tau(z_0)\tau(y)z_0)-f(\tau(z_0)yz_0)=2f(\tau(z_0))f(y)=-2\mu(\tau(z_0))f(z_0)f(y).
\end{equation}
On the other hand by using  (\ref{eq7}) and $\mu(x\tau(x))=1$ for
all $x\in G$ we get
$$\mu(y)f(\tau(z_0)\tau(y)z_0)=-\mu(y)\mu(\tau(z_0)\tau(y)z_0)f(\tau(z_0)yz_0)=-f(\tau(z_0)yz_0).$$
So, equation (\ref{eq14}) implies that
\begin{equation}\label{eq15}
f(\tau(z_0)yz_0)=\mu(\tau(z_0))f(z_0)f(y)
\end{equation}for all $y\in G$. Since $z_0,$ $\tau(z_0)$ are in
the center of $G$ then we obtain (\ref{eq9}).\\
Putting $y=z_0$ in  (\ref{eq5}) and using (\ref{eq9}) and
$\mu(z_0\tau(z_0))=1$ we get
$\mu(z_0)f(x\tau(z_0)z_0)-f(xz_{0}^{2})$
$=\mu(z_0)\mu(\tau(z_0))f(z_0)f(x)-f(xz_{0}^{2})=2f(x)f(z_0).$ So, we have $f(xz_{0}^{2})=-f(x)f(z_0)$ for all $x\in G.$ Which proves  (\ref{eq10}).\\
By replacing $x$ by $xz_0$ in the functional equation (\ref{eq5})
and using (\ref{eq10}) we obtain
$2f(xz_0)f(y)=\mu(y)f(x\tau(y)z{_0}^{2})-f(xyz_{0}^{2})=$
$-\mu(y)f(z_0)f(x\tau(y))+f(z_0)f(xy)$. \\If $f(z_0)=0$, then
$f(y)f(xz_0)=0$ for all $x,y\in G$. Since $f\neq 0$ then $f(xz_0)=0$
for all $x\in G$ so, we have $
\mu(y)f(x\tau(y)z_0)-f(xyz_0)=0=2f(x)f(y)$ for all $x,y\in G$ from
which we deduce that $f(x)=0$ for all $x\in G$. This contradicts the
assumption that $f\neq 0$ and it follows that
$f(z_0)\neq 0$.\\
By replacing $y$ by $z\tau(z)$ in (\ref{eq5}) and using
$\mu(z\tau(z))=1$ we get
$f(xz\tau(z)z_0)-f(xz\tau(z)z_0)=2f(x)f(z\tau(z))=0$ for all $x,z\in
G$. Since $f\neq 0$ then we get $f(z\tau(z))=0$ for all $z\in G.$
\\From (\ref{eq10}) and (\ref{eq7}) we have
$0=f(\tau(z_{0}^{2})z_{0}^{2})=-f(z_0)f(\tau(z_{0}^{2}))=\mu(\tau(z_{0}^{2}))f(z_{0}^{2})f(z_{0}).$
Since $f(z_{0})\neq 0$ we get
$$
\mu(\tau(z_{0}^{2}))f(z_{0}^{2})=0.$$ It follows from $1 =
\mu(x\tau(x)) = \mu(x)\mu(\tau(x))$, that $\mu(x)\neq 0$ for all
$x\in G.$ It is thus immediate  that
\begin{equation}\label{eq16}
f(z_{0}^{2} ) = 0.\end{equation}
 Putting $x=z_{0}^{2}$ in
(\ref{eq5}) and using (\ref{eq16}) to get
$\mu(y)f(z_{0}^{2}\tau(y)z_0)=f(z_{0}^{2}yz_0)$ and from
(\ref{eq10}) we obtain $-\mu(y)f(z_0)f(\tau(y)z_0)=-f(z_0)f(yz_0).$
Since $f(z_0)\neq 0$ then we get
\begin{equation}\label{eq17}
    \mu(y)f(\tau(y)z_0)=f(yz_0)
\end{equation}for all $y\in G.$ \\The statements
for the group case are consequences of the formulas (\ref{eq9}) and
(\ref{eq10}). This completes the proof.
\end{proof}
\begin{lem}Let $f\neq 0$ be a solution of equation (\ref{eq5}). Then\\
(a) the function defined by $$g(x)\;:=\frac{f(xz_0)}{f(z_0)}\;for
\;x\in G$$ is a non-zero abelian solution of d'Alembert's functional
equation (\ref{eq6}).\\(b) The function $g$ from (a) has the form
$g=\frac{\chi+\mu\chi\circ \tau}{2}$, where $\chi$ :
$G\longrightarrow \mathbb{C}$, $\chi\neq 0$, is a multiplicative
function.
\end{lem}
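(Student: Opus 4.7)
The strategy for part (a) is to manipulate (\ref{eq5}) so that the constraint on $f$ becomes a symmetric, d'Alembert-type identity for the shifted function $g(x)=f(xz_0)/f(z_0)$. I would begin by substituting $y\mapsto yz_0$ in (\ref{eq5}). Because $\tau(yz_0)=\tau(z_0)\tau(y)$ and because $z_0$, hence $\tau(z_0)$, lies in the center of $G$, the left-hand side rearranges to $\mu(y)\mu(z_0)f(x\tau(y)\tau(z_0)z_0)-f(xyz_0^2)$. Now (\ref{eq9}), applied with $x\tau(y)$ in place of $x$, combined with $\mu(z_0)\mu(\tau(z_0))=\mu(z_0\tau(z_0))=1$, collapses the first term to $\mu(y)f(x\tau(y))f(z_0)$, while (\ref{eq10}) turns the second term into $-f(z_0)f(xy)$. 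Dividing by the nonzero constant $f(z_0)$ (cf.\ (\ref{eq8})) then produces the auxiliary identity
\begin{equation*}
f(xy)+\mu(y)f(x\tau(y))=2f(x)g(y),\qquad x,y\in G.
\end{equation*}

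I would then replace $x$ by $xz_0$ in this auxiliary identity. Centrality of $z_0$ turns the left side into $f(xyz_0)+\mu(y)f(x\tau(y)z_0)$ and the right side into $2f(xz_0)g(y)$; dividing once more by $f(z_0)$ yields exactly (\ref{eq6}) for $g$. Nontriviality of $g$ is immediate from (\ref{eq10}): pick $a\in G$ with $f(a)\neq 0$; then
\begin{equation*}
g(az_0)=\frac{f(az_0^2)}{f(z_0)}=-f(a)\neq 0.
\end{equation*}
The abelianness of $g$ is then exactly the content of the cited [8, Proposition~9.17(c)] applied to the nonzero solution $g$ of the $\mu$-version (\ref{eq6}) of d'Alembert's equation on the semigroup $G$.

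Part (b) is then a direct invocation of [8, Proposition~8.14(a)]: once (a) delivers an abelian, nonzero solution of (\ref{eq6}), that proposition yields the desired representation $g=(\chi+\mu\,\chi\circ\tau)/2$ with $\chi:G\to\mathbb{C}$ a nonzero multiplicative function. The main obstacle in the whole argument is purely bookkeeping in the derivation of the auxiliary identity: one must be careful to apply each of (\ref{eq9})--(\ref{eq11}) in the correct argument slot, and to absorb the $\mu$-factors consistently using $\mu(x\tau(x))=1$ and the multiplicativity of $\mu$. Once the auxiliary identity is in hand, the remaining steps are formal.
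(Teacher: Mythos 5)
Your derivation that $g$ satisfies (\ref{eq6}) is sound and is essentially the paper's computation read in the opposite direction: the paper starts from $f(z_0)^2\left[g(xy)+\mu(y)g(x\tau(y))\right]$ and uses (\ref{eq9}) and (\ref{eq10}) to rewrite it as $\mu(yz_0)f((xz_0)\tau(yz_0)z_0)-f((xz_0)(yz_0)z_0)=2f(xz_0)f(yz_0)$, whereas you substitute $y\mapsto yz_0$ first, extract the auxiliary identity $f(xy)+\mu(y)f(x\tau(y))=2f(x)g(y)$, and then shift $x$ by $z_0$; the bookkeeping with $\mu(z_0\tau(z_0))=1$ is the same, and your nonvanishing argument via $g(az_0)=-f(a)$ is fine.

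The gap is in the abelianness of $g$, which is the substantive point of part (a). You assert that abelianness ``is exactly the content of [8, Proposition 9.17(c)],'' but that proposition only says that a nonzero solution of (\ref{eq6}) is a pre-d'Alembert function; it does not make it abelian. D'Alembert-type equations admit genuinely non-abelian solutions on non-abelian semigroups (traces of irreducible two-dimensional representations, for instance), so an extra input specific to this $g$ is indispensable. The paper supplies it: by (\ref{eq9'}) one has $g(z_0)=f(z_0^2)/f(z_0)=0$, while $g(z_0^2)=f(z_0^3)/f(z_0)=-f(z_0)\neq 0$, hence $d(z_0)=2g(z_0)^2-g(z_0^2)=f(z_0)\neq 0=g(z_0)^2$; the condition $g(z_0)^2\neq d(z_0)$ is exactly what [8, Proposition 8.14(a)] requires to conclude that the pre-d'Alembert function $g$ is abelian. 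You have also shifted the citations by one: 8.14(a) is the abelianness criterion, not the structure theorem, and the representation $g=(\chi+\mu\chi\circ\tau)/2$ in part (b) is obtained from [8, Proposition 9.31]. Once the $d(z_0)$ computation is inserted and the references are realigned, your argument matches the paper's.
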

\begin{proof}By using (\ref{eq9}) and (\ref{eq10}) we get
$$f(z_0)^{2}[g(xy)+\mu(y)g(x\tau(y))]=\mu(y)f(z_0)f(x\tau(y)z_0)+f(z_0)f(xyz_0)$$
$$=\mu(y)\mu(z_0)f(x\tau(y)z_0\tau(z_0)z_0)-f(xyz_0z_{0}^{2})$$
$$=\mu(yz_0)f((xz_0)\tau(yz_0)z_0)-f((xz_0)(yz_0)z_0)=2f(xz_0)f(yz_0),$$ which implies that $g$ is a solution of equation (\ref{eq6}). Furthermore,
$g(z_{0}^{2})=f(z_{0}^{2}z_0)/f(z_0)=-f(z_0)f(z_0)/f(z_0)=-f(z_0)\neq
0$, so $g\neq 0.$\\As $g$ is a solution of equation (\ref{eq6}) then
by [8, Proposition 9.17(c)] $g$ is a solution of pre-d'Alembert
function. Note that $g(z_0)=f(z_{0}^{2})/f(z_0)=0$,
$d(z_0)=2g(z_0)^{2}-g(z_{0}^{2})=0-(-f(z_0))=f(z_0)\neq 0$. So, we
have $g(z_0)^{2}\neq d(z_0)$ and according to [8, Proposition
8.14(a)] $g$ is  abelian. By using the result obtained by Stetk\ae r
in  [8, Proposition 9.31, p. 158] we get the proof of (b). We refer
also to \cite{St1}.
\end{proof}
Now, we are ready to prove the first main result of the present
paper.
\begin{thm} The non-zero solution $f$ : $G\longrightarrow \mathbb{C}$ of
the functional equation (\ref{eq5}) are the functions of the form
\begin{equation}\label{eq300}
    f=\mu(z_0)\chi(\tau(z_0))\frac{\chi-\mu\chi\circ\tau}{2}=\chi(z_0)\frac{\mu\chi\circ\tau-\chi}{2},
\end{equation}where $\chi$ : $G\longrightarrow \mathbb{C}$ is a
multiplicative function such that $\chi(z_0)\neq 0$ and
$\mu(z_0)\chi(\tau(z_0))=-\chi(z_0)$. Furthermore,
$f(z_0)=\mu(z_0)\chi(z_0\tau(z_0))$.\\If $G$ is a group and $\tau$
is the group inversion, then $\chi(z_{0}^{2})=-\mu(z_0)$ and
$f(z_0)=\mu(z_0)$ for any non-zero solution of equation
(\ref{eq5}).\\If $G$ is a topological semigroup and that $\tau$ :
$G\longrightarrow G$, $\mu$ : $G\longrightarrow \mathbb{C}$ are
continuous, then the non-zero solution $f$ of equation (\ref{eq5})
is continuous, if and only if $\chi$ is continuous.
\end{thm}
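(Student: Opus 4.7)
My plan is to combine Lemmas 2.1 and 2.2 with the symmetry (\ref{eq7}) to determine $f$ in closed form, then check the converse by direct substitution. From a non-zero solution $f$, Lemma 2.2 supplies the abelian solution $g(x):=f(xz_0)/f(z_0)=(\chi+\mu\chi\circ\tau)/2$ of (\ref{eq6}) for some non-zero multiplicative $\chi$. The identity (\ref{eq10}) rewrites as $g(xz_0)=-f(x)$, so I would expand the left side using multiplicativity of $\chi$ and the fact that $\tau(z_0)$ is central (which follows from centrality of $z_0$), obtaining the provisional formula
\[
f(x) \;=\; -\tfrac{1}{2}\bigl(\chi(z_0)\chi(x)+\mu(z_0)\chi(\tau(z_0))\,\mu(x)\chi(\tau(x))\bigr).
\]

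Next I would feed this expression back into the symmetry relation (\ref{eq7}), i.e.\ $f(x)=-\mu(x)f(\tau(x))$, using $\mu(x)\mu(\tau(x))=1$ to simplify the $\tau$-composed side. After transposition, all $x$-dependence packages into a single factorization
\[
\bigl(\chi(z_0)+\mu(z_0)\chi(\tau(z_0))\bigr)\bigl(\chi(x)+\mu(x)\chi(\tau(x))\bigr)=0.
\]
The second factor equals $2g(x)$ and $g\neq 0$ by Lemma 2.2, so the first factor must vanish; this is exactly the compatibility condition $\mu(z_0)\chi(\tau(z_0))=-\chi(z_0)$ in the theorem. Reinserting this into the provisional formula produces both equivalent expressions in (\ref{eq300}), while $f\neq 0$ rules out $\chi(z_0)=0$, and evaluating the closed form at $x=z_0$ gives $f(z_0)=-\chi(z_0)^2=\mu(z_0)\chi(z_0\tau(z_0))$.

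The converse amounts to plugging the candidate $f=\chi(z_0)(\mu\chi\circ\tau-\chi)/2$ into (\ref{eq5}) and expanding both sides using multiplicativity, the identity $\mu(y\tau(y))=1$, the centrality of $z_0$, and the compatibility condition; this is routine bookkeeping. In the group case with $\tau(x)=x^{-1}$, multiplicativity gives $\chi(\tau(z_0))=\chi(z_0)^{-1}$, the compatibility condition rearranges to $\chi(z_0^2)=-\mu(z_0)$, and consequently $f(z_0)=-\chi(z_0)^2=\mu(z_0)$. For the continuity statement, subtracting $f/\chi(z_0)=(\mu\chi\circ\tau-\chi)/2$ from $g=(\chi+\mu\chi\circ\tau)/2$ produces the key identity $\chi=g-f/\chi(z_0)$; since $g(x)=f(xz_0)/f(z_0)$ inherits continuity from $f$, this forces $\chi$ to be continuous whenever $f$ is, while the reverse direction is immediate from the closed form together with continuity of $\mu$ and $\tau$.

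The main obstacle I anticipate is spotting the factorization in the second paragraph: without noticing that the symmetry (\ref{eq7}) collapses cleanly into the product $(\chi(z_0)+\mu(z_0)\chi(\tau(z_0)))\cdot 2g(x)=0$, one risks an $x$-by-$x$ analysis that does not yield the compatibility condition directly. Once that factorization is in hand, the remaining steps are essentially algebraic verifications.
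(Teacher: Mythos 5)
Your argument is correct and follows essentially the same route as the paper: both express $f$ as a $z_0$-translate of the d'Alembert solution $g$ from Lemma 2.2, substitute $g=\frac{\chi+\mu\chi\circ\tau}{2}$, and extract the compatibility condition $\mu(z_0)\chi(\tau(z_0))=-\chi(z_0)$ from a factored identity whose other factor cannot vanish identically. The only differences are cosmetic --- you use (\ref{eq10}) and (\ref{eq7}) where the paper sets $y=z_0$ in (\ref{eq5}) and invokes (\ref{eq11}) --- and you additionally spell out the converse verification and the continuity argument that the paper delegates to Stetk\ae r's proof.
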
\begin{proof}Let $f$ : $G\longrightarrow \mathbb{C}$ be a non-zero solution of equation (\ref{eq5}). Then we get
$$f(x)=\frac{\mu(z_0)f(x\tau(z_0)z_0)-f(xz_0z_0)}{2f(z_0)}=\frac{\mu(z_0)g(x\tau(z_0))-g(xz_0)}{2}$$ for all $x\in G$
and where $g$ is the function defined in Lemma 2.2. So, from Lemma
2.2 we have $g=\frac{\chi+\mu\chi\circ\tau}{2}$ and then after an
easy computation we obtain
\begin{equation}\label{eq301}
    f=\frac{\chi(z_0)-\mu(z_0)\chi(\tau(z_0))}{2}\frac{\mu\chi\circ\tau-\chi}{2}.
\end{equation} From (\ref{eq11}) we have
\begin{equation}\label{eq302}
\mu(x)f(\tau(x)z_0)=f(xz_0)\end{equation}
 for all $x\in G$.
Substituting  (\ref{eq301})  into (\ref{eq302}) we get after an
elementary computation that
$$[\mu(z_0)\chi(\tau(z_0))+\chi(z_0)][\chi-\mu\chi\circ\tau]=0.$$ The rest of
the proof is similar to the one  by Stetk\ae r \cite{St3}.\\ Under
the hypotheses of Theorem 2.3 any solution $f$ of (\ref{eq5}) is
abelian.
\end{proof}
\section{Solutions of equation (\ref{eq555}) on monoids}
Throughout this section $G$ denote a monoid, $\sigma$ :
$G\longrightarrow G$ is an automorphism involutive of $G$. That is
$\sigma(xy)=\sigma(x)\sigma(y)$ and $\sigma(\sigma(x))=x$ for all
$x,y\in G$. The element $z_0$ denotes a fixed element in the center
of $G$. Finally $\mu$ : $G\longrightarrow \mathbb{C}$ is a
multiplicative function such that $\mu(x\sigma(x))=1$ for all $x\in
G$.\\The following useful lemma will be used later.
\begin{lem} Let $f \neq 0$ be a solution of (\ref{eq555}). Then \\ (a) For
all $x\in G$ we have
\begin{equation}\label{eqo7}
    f(x)=-\mu(x)f(\sigma(x)),
\end{equation}
\begin{equation}\label{eqo8}
    f(z_0)\neq 0,
\end{equation}
\begin{equation}\label{eqo9}
    f(x\sigma(z_0)z_0)=\mu(\sigma(z_0))f(x)f(z_0),
\end{equation}
\begin{equation}\label{eqo10}
    f(xz_{0}^{2})=-f(z_0)f(x),
\end{equation}
\begin{equation}\label{eqo11}
    \mu(x)f(\sigma(x)z_0)=f(xz_0).
\end{equation} (b) the function defined by $$g(x)\;:=\frac{f(xz_0)}{f(z_0)}\;for
\;x\in G$$ is a non-zero  solution of a variant of d'Alembert's
functional equation
\begin{equation}\label{EQUATIONES}
    g(xy)+\mu(y)g(\sigma(y)x)=2g(x)g(y),\;x,y\in G.
\end{equation}
(c) The function $g$ from (b) has the form
$g=\frac{\chi+\mu\chi\circ \sigma}{2}$, where $\chi$ :
$G\longrightarrow \mathbb{C}$, $\chi\neq 0$, is a multiplicative
function.
 \end{lem}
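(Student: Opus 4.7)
The plan is to follow the same overall template as Section~2 but exploit that $G$ is now a monoid and that $\sigma$ is an automorphism rather than an anti-automorphism. Concretely, I would split part (a) into two stages: first derive the easy identities from the monoid structure, then bootstrap the remaining ones. Setting $y=e$ in (\ref{eq555}) gives $0\cdot f(xz_0)=2f(x)f(e)$, so $f(e)=0$; setting $x=e$ then gives (\ref{eqo11}) immediately. The non-trivial step is (\ref{eqo7}). The trick used for (\ref{eq7}) does not transfer verbatim because swapping $y\mapsto\sigma(y)$ no longer interchanges $\sigma(y)xz_0$ and $xyz_0$ when $\sigma$ is an automorphism. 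Instead, I would combine three instances of (\ref{eq555}): the original, the one with $y\mapsto\sigma(y)$ (then multiplied by $\mu(y)$, using $\mu(y\sigma(y))=1$), and the one with $x,y$ interchanged. Eliminating $f(yxz_0)$ between the latter two yields
\begin{equation*}
\mu(x)f(\sigma(x)yz_0)-\mu(y)f(x\sigma(y)z_0)=2f(x)\bigl(f(y)+\mu(y)f(\sigma(y))\bigr).
\end{equation*}
Now I would apply (\ref{eqo11}) to each argument on the left, using that $\sigma$ is an automorphism so $\sigma(\sigma(x)y)=x\sigma(y)$; both terms collapse to $\mu(y)f(x\sigma(y)z_0)$, and the left-hand side vanishes. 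Since $f\neq 0$, this forces (\ref{eqo7}).

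Once (\ref{eqo7}) is in hand, the remaining identities follow exactly as in Lemma~2.1. For (\ref{eqo9}) I would substitute $x=\sigma(z_0)$ in (\ref{eq555}), rewrite the right-hand side via $f(\sigma(z_0))=-\mu(\sigma(z_0))f(z_0)$, and transform the first left-hand term by applying (\ref{eqo7}) to $f(\sigma(y)\sigma(z_0)z_0)$ and exploiting that both $z_0$ and $\sigma(z_0)$ are central; this identifies the two left-hand terms and yields (\ref{eqo9}). Specializing $y=z_0$ in (\ref{eq555}) and plugging in (\ref{eqo9}) gives (\ref{eqo10}). Finally for (\ref{eqo8}), I would assume $f(z_0)=0$; by (\ref{eqo10}) we then have $f(xz_0^{2})=0$ identically, so replacing $x$ by $xz_0$ in (\ref{eq555}) collapses the left-hand side to $0$, giving $f(xz_0)f(y)=0$ for all $x,y$ and hence $f\equiv 0$, a contradiction.

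For part (b), following the computation in Lemma~2.2, I would compute $f(z_0)^{2}\bigl[g(xy)+\mu(y)g(\sigma(y)x)\bigr]$ and rewrite each factor $f(\cdot)f(z_0)$ using (\ref{eqo9}) (respectively (\ref{eqo10})): this converts the bracket into $\mu(yz_0)f(\sigma(yz_0)(xz_0)z_0)-f((xz_0)(yz_0)z_0)$, which by (\ref{eq555}) applied to the pair $(xz_0,yz_0)$ equals $2f(xz_0)f(yz_0)=2f(z_0)^{2}g(x)g(y)$. The fact that $g\neq 0$ follows from $g(e)=1$ (or from $g(z_0^{2})=-f(z_0)\neq 0$).

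For part (c), having established that $g$ is a non-zero solution of the variant d'Alembert equation (\ref{EQUATIONES}), I would invoke the classification result of Elqorachi and Redouani [2, Lemma~3.2] to conclude the announced form $g=(\chi+\mu\chi\circ\sigma)/2$ with $\chi$ multiplicative and non-zero. The main obstacle throughout is the derivation of (\ref{eqo7}) outlined above; once this evenness-type relation is available, all subsequent manipulations parallel those of Section~2 and only require care in using centrality of $z_0$ and $\sigma(z_0)$ together with the automorphism property of $\sigma$.
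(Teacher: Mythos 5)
Your proposal is correct and follows essentially the same route as the paper: $f(e)=0$ and (\ref{eqo11}) from $y=e$ and $x=e$, then (\ref{eqo7}) by combining the $x\leftrightarrow y$-swapped instance of (\ref{eq555}) with (\ref{eqo11}) and the automorphism property, then (\ref{eqo9}), (\ref{eqo10}), (\ref{eqo8}) and the d'Alembert computation for $g$, with [2, Lemma 3.2] for part (c). Your derivation of (\ref{eqo7}) rearranges the same three ingredients slightly differently (the paper substitutes $x\mapsto\sigma(x)$ and rewrites one term via (\ref{eqo11}), while you add the $y\mapsto\sigma(y)$ and swapped instances and show the left-hand side cancels), but the computation is equivalent and correct.
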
\begin{proof} (a) Putting $y=e$ in (\ref{eq555}) we get $f(xz_0)-f(xz_0)=2f(x)f(e)=0$. Since $f\neq0$ then we have $f(e)=0$.\\
 Taking $x=e$ in (\ref{eq555}) and using $f(e)=0$ we get
 (\ref{eqo11}). \\By replacing $x$ by $\sigma(x)$ in (\ref{eq555}) we
 obtain $$\mu(y)f(\sigma(y)\sigma(x)z_0)-f(\sigma(x)yz_0)
 =2f(\sigma(x))f(y).$$ Since from (\ref{eqo11}) we have $$\mu(y)f(\sigma(y)\sigma(x)z_0)=\mu(y)f(\sigma(yx)z_0)$$
 $$=\mu(y)\mu(\sigma(yx))f(yxz_0)=\mu(\sigma(x))f(yxz_0)$$ and it follows that
 $$\mu(x)[\mu(\sigma(x))f(yxz_0)-f(\sigma(x)yz_0)]=\mu(x)2f(\sigma(x))f(y)=f(yxz_0)-\mu(x)f(\sigma(x)yz_0).$$
 Since
 $$f(yxz_0)-\mu(x)f(\sigma(x)yz_0)=-[\mu(x)f(\sigma(x)yz_0)-f(yxz_0)]=-2f(y)f(x).$$
 So, we conclude that $$-2f(x)f(y)=2\mu(x)f(\sigma(x))f(y)$$ for all
 $x,y\in G$. Since $f\neq0$ then we get (\ref{eqo7}).\\
 Putting $x=\sigma(z_0)$ in (\ref{eq555}) and using (\ref{eqo7}) we get $$\mu(y)f(\sigma(y)\sigma(z_0)z_0)-f(\sigma(z_0)yz_0) =2f(\sigma(z_0))f(y)=
 -2\mu(\sigma(z_0))f(z_0)f(y).$$ On the other hand we have
 $$\mu(y)f(\sigma(y)\sigma(z_0)z_0)=-\mu(y)\mu(\sigma(y)\sigma(z_0)z_0)f(yz_0\sigma(z_0))=-f(yz_0\sigma(z_0)).$$
 Then, since $z_0$, $\sigma(z_0)$ are in the center of $G$ we get (\ref{eqo9}).\\
 By replacing $y$ by $z_0$ in (\ref{eq555}) and using (\ref{eqo9}) we
 get $$\mu(z_0)f(\sigma(z_0)xz_0)-f(xz_{0}^{2})
 =2f(x)f(z_0)=\mu(z_0)\mu(\sigma(z_0))f(x)f(z_0)-f(xz_{0}^{2})=f(x)f(z_0)-f(xz_{0}^{2}).$$
 So, we deduce equation (\ref{eqo10}).\\
 By replacing  $x$ by $xz_0$ in  (\ref{eq555}) and using (\ref{eqo10}) we get $$\mu(y)f(\sigma(y)xz_{0}^{2})-f(xz_0yz_0)=2f(xz_0)f(y)$$
 $$=-\mu(y)f(\sigma(y)x)f(z_0)+f(xy)f(z_0).$$If $f(z_0)=0$, then
 $f(xz_0)f(y)=0$ for all $x,y\in G$, since $f\neq0$ then $f(xz_0)=0$
 for all $x\in G$ and from (\ref{eq555}) we obtain $2f(x)f(y)=0$
 for all $x,y\in G$. This contradict the assumption that $f\neq0$ and this proves (\ref{eqo8}).\\
 (b) For all $x,y\in G$ we have $$f(z_0)^{2}[g(xy)+\mu(y)g(\sigma(y)x)]=f(z_0)\mu(y)f(\sigma(y)xz_0)+f(z_0)f(xyz_0)$$
 $$=\mu(z_0)\mu(y)f(\sigma(y)xz_0\sigma(z_0)z_0)-f(xyz_0z_{0}^{2})=\mu(yz_0)f(\sigma(yz_0)xz_0z_0)-f((xz_0)(yz_0)z_0)$$$$=2f(xz_0)f(yz_0).$$
 Dividing by $f(z_0)^{2}$ we get (b). Furthermore, $g(e)=1$ then $g\neq
 0.$\\ Now, from [2, Lemma 3.2] we get (c) and this
 completes the proof.\\The secand main result of this paper  is:
 \end{proof}
 \begin{thm} The non-zero solution $f$ : $G\longrightarrow \mathbb{C}$ of
the functional equation (\ref{eq555}) are the functions of the form
\begin{equation}\label{eq300}
    f=f=\mu(z_0)\chi(\sigma(z_0))\frac{\chi-\mu\chi\circ\sigma}{2}=\chi(z_0)\frac{\mu\chi\circ\sigma-\chi}{2},
\end{equation}where $\chi$ : $S\longrightarrow \mathbb{C}$ is a
multiplicative function such that $\chi(z_0)\neq 0$ and
$\mu(z_0)\chi(\sigma(z_0))=-\chi(z_0)$. Furthermore,
$f(z_0)=\mu(z_0)\chi(z_0\sigma(z_0))$.\\If $G$ is a topological
monoid and that $\sigma$ : $G\longrightarrow G$, $\mu$ :
$G\longrightarrow \mathbb{C}$ are continuous, then the non-zero
solution $f$ of equation (\ref{eq555}) is continuous, if and only if
$\chi$ is continuous.
\end{thm}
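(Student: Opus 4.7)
The plan is to mirror the proof of Theorem 2.3 using Lemma 3.1 as the substitute for Lemmas 2.1 and 2.2. The starting point is to solve for $f(x)$ by setting $y=z_0$ in (\ref{eq555}): combining this with (\ref{eqo10}) yields
$$f(x)=\frac{\mu(z_0)f(\sigma(z_0)xz_0)-f(xz_0^2)}{2f(z_0)}.$$
Since $z_0$ is central and $\sigma$ is an automorphism, $\sigma(z_0)$ is also central, so the numerator rewrites through $g(x)=f(xz_0)/f(z_0)$ as
$$f(x)=\frac{\mu(z_0)g(x\sigma(z_0))-g(xz_0)}{2}.$$

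Next I would substitute $g=\tfrac{1}{2}(\chi+\mu\chi\circ\sigma)$ from Lemma 3.1(c) and expand using the multiplicativity of $\chi$ and $\mu$, the relation $\sigma(\sigma(x))=x$, and $\mu(z_0\sigma(z_0))=1$. The mixed terms collapse and one obtains
$$f=\frac{\mu(z_0)\chi(\sigma(z_0))-\chi(z_0)}{2}\cdot\frac{\chi-\mu\chi\circ\sigma}{2}.$$
The key identity $\mu(z_0)\chi(\sigma(z_0))=-\chi(z_0)$ then drops out of the observation that $g(z_0)=f(z_0^2)/f(z_0)=0$, where $f(z_0^2)=0$ comes from $f(e)=0$ together with (\ref{eqo10}). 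Plugging this identity back into the leading factor rewrites it as either $-\chi(z_0)$ or $\mu(z_0)\chi(\sigma(z_0))$, giving the two equivalent expressions in the statement.

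The auxiliary claims follow quickly: if $\chi(z_0)=0$ then also $\chi(\sigma(z_0))=0$ by the relation above and the formula would force $f\equiv 0$, hence $\chi(z_0)\neq 0$; and evaluating the formula at $z_0$ together with the condition gives $f(z_0)=\mu(z_0)\chi(z_0\sigma(z_0))$. For the converse I would plug the claimed formula for $f$ directly into (\ref{eq555}) and verify the identity using only the multiplicativity of $\chi$ and $\mu$, the automorphism property of $\sigma$, and $\mu(y\sigma(y))=1$; the two summands $\chi$ and $\mu\chi\circ\sigma$ contribute symmetrically and the condition $\mu(z_0)\chi(\sigma(z_0))=-\chi(z_0)$ is precisely what is needed for the cross terms to produce the right hand side.

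For the topological statement, continuity of $f$ whenever $\chi$ is continuous is immediate from the explicit formula. Conversely, $g$ inherits continuity from $f$ via its definition, and solving the linear system $2g=\chi+\mu\chi\circ\sigma$, $2f=-\chi(z_0)(\chi-\mu\chi\circ\sigma)$ for $\chi$ gives $\chi=g-f/\chi(z_0)$, so continuity of $f$ transfers to $\chi$. The main obstacle I anticipate is not conceptual but the careful bookkeeping in the expansion of $\mu(z_0)g(x\sigma(z_0))-g(xz_0)$ and in the converse verification; everything else is forced by Lemma 3.1 and tracks the semigroup case of Theorem 2.3 line by line.
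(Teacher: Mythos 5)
Your proposal is correct and follows the paper's proof almost line by line: set $y=z_0$ in the equation to write $f=\tfrac{1}{2}(\mu(z_0)g(\sigma(z_0)\,\cdot\,)-g(\,\cdot\,z_0))$, substitute $g=\tfrac{1}{2}(\chi+\mu\chi\circ\sigma)$ from Lemma 3.1(c), and obtain the product form $f=\tfrac{\mu(z_0)\chi(\sigma(z_0))-\chi(z_0)}{2}\cdot\tfrac{\chi-\mu\chi\circ\sigma}{2}$. The one genuine deviation is how you get the constraint $\mu(z_0)\chi(\sigma(z_0))=-\chi(z_0)$: the paper substitutes the product form into the identity $\mu(x)f(\sigma(x)z_0)=f(xz_0)$ (equation (\ref{eqo11})) and factors out $[\mu(z_0)\chi(\sigma(z_0))+\chi(z_0)][\chi-\mu\chi\circ\sigma]=0$, then uses $\chi\neq\mu\chi\circ\sigma$; you instead read it off from $g(z_0)=f(z_0^2)/f(z_0)=0$, which follows from $f(e)=0$ and (\ref{eqo10}). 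Your route is valid and slightly cleaner, since it avoids the case analysis on whether $\chi=\mu\chi\circ\sigma$. You are also more explicit than the paper on two points it glosses over: the converse verification that every function of the stated form actually solves (\ref{eq555}) (the paper never checks this), and the recovery $\chi=g-f/\chi(z_0)$ for the continuity statement, where the paper just cites [8, Theorem 3.18(d)]. Both of your arguments check out.
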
\begin{proof} Let $f$ : $G\longrightarrow \mathbb{C}$ be a
non-zero solution of the functional equation (\ref{eq555}). Putting
$y=z_0$ in (\ref{eq555}) we get
\begin{equation}\label{elqorachi12}
f(x)=\frac{\mu(z_0)f(\sigma(z_0)xz_0)-f(xz_0z_0)}{2f(z_0)}=\frac{1}{2}(\mu(z_0)g(\sigma(z_0)x)-g(xz_0))\end{equation}
for all $x\in G$ and where g is the function defined in Lemma 6.1.
So, from Lemma 6.2 we have $g=\frac{\chi+\mu\chi\circ \sigma}{2}$,
where $\chi$ : $G\longrightarrow \mathbb{C}$, $\chi\neq 0$, is a
multiplicative function. Substituting this into (\ref{elqorachi12})
we find that $f$ has the form
\begin{equation}\label{equo1àà}
    f=\frac{\chi(z_0)-\mu(z_0)\chi(\sigma(z_0))}{2}\frac{\mu\chi\circ\sigma-\chi}{2}.
\end{equation} We have from (\ref{eqo11}) that $f$ satisfies
\begin{equation}\label{equa2àà}
\mu(x)f(\sigma(x)z_0)=f(xz_0)\end{equation}
 for all $x\in G$.
Applying the last expression of $f$ in  (\ref{equa2àà}) gives after
an elementary computations that
$$[\mu(z_0)\chi(\tau(z_0))+\chi(z_0)][\chi-\mu\chi\circ\tau]=0.$$ Since $\chi\neq \mu\chi\circ\sigma$, we deduce that $\mu(z_0)\chi(\tau(z_0))+\chi(z_0)=0$.
So, (\ref{equo1àà}) can be written as follows
$$f=-\mu(z_0)\chi(\sigma(z_0))\frac{\chi-\mu\chi\circ\sigma}{2}=\chi(z_0)\frac{\chi-\mu\chi\circ\sigma}{2}.$$
For the topological statement we use [8, Theorem 3.18(d)]. This ends
the proof. \end{proof} \begin{rem}On groups the solutions of the
functional equation
$$\mu(y)f(\sigma(y)xz_0)+g(xyz_0)=h(x)h(y), \; x,y\in G,$$ where $z_0$ is an arbitrary element of $G$ (not necessarily in the center of $G$) can be found by putting
$f_1(x)=f(xz_0)$; $f_2(x)=g(xz_0)$, so we have $f_1,f_2$ satisfy the
functional equation
$$\mu(y)f_1(\sigma(y)x)+f_2(xy)=h(x)h(y),\; x,y\in G$$ which was
solved by the authors in \cite{preprint}.
\end{rem}

% ----------------------------------------------------------------

Author's address:\\\\  Bouikhalene Belaid, Polydisciplinary Faculty,
University Sultan Moulay Slimane, Beni-Mellal, Morocco, E-mail :
bbouikhalene@yahoo.fr\\ \\ Elqorachi Elhoucien, Department of
Mathematics, Faculty of Sciences, University Ibn Zohr, Agadir,
Morocco, E-mail: elqorachi@hotamail.com
\end{document}